\newcommand{\rn}{\mathbb R}
\newcommand{\rnnn}{\mathbb R^{n+1}}
\newcommand{\rt}{\mathbb R_{+}}
\newcommand{\sn}{ {\mathbb{S}^{n}}}
\newcommand{\psum}{{+_{\negthinspace\kern-2pt p}}\,}
\newcommand{\qsum}[1]{{+_{\negthinspace\kern-2pt #1}}\,}
\newcommand{\dpsum}{{\tilde+_{\negthinspace\kern-1pt p}}\,}
\newcommand{\dqsum}[1]{{\tilde+_{\negthinspace\kern-1pt #1}}\,}
\newcommand{\lsub}[1]{\hskip -1.5pt\lower.5ex\hbox{$_{#1}$}}
\numberwithin{equation}{section}
\newtheorem{theo}{Theorem}[section]
\newtheorem{coro}[theo]{Corollary}
\newtheorem{lem}[theo]{Lemma}
\newtheorem{rem}[theo]{Remark} \theoremstyle{definition}
\begin{document}

\title{Uniqueness of solutions to the isotropic $L_{p}$ Gaussian Minkowski problem
}
\author[J. Hu]{Jinrong Hu}
\address{Institut f\"{u}r Diskrete Mathematik und Geometrie, Technische Universit\"{a}t Wien, Wiedner Hauptstrasse 8-10, 1040 Wien, Austria
 }
\email{jinrong.hu@tuwien.ac.at}
\begin{abstract}
The uniqueness of solutions to the isotropic $L_{p}$ Gaussian Minkowski problem in $\rnnn$ is established when $-(n+1)<p<-1$ with $n\geq 1$, without requiring the origin-centred assumption on convex bodies.
\end{abstract}
\keywords{Uniqueness results, isotropic $L_{p}$ Gaussian Minkowski problem}

\subjclass[2010]{35A02, 52A20}

\thanks{This work was supported by the Austrian Science Fund (FWF): Project P36545.}

\maketitle

\baselineskip18pt

\parskip3pt

\section{Introduction}

The Minkowski problem prescribing the surface area measure stands as one of the fundamental problems in the Brunn-Minkowski theory, which was originally proposed and addressed by Minkowski himself \cite{M897,M903}. This problem has seen significant progress through a series of papers (see, e.g. ~\cite{A39,A42,B87,FJ38}). As an important extension of the Minkowski problem, the $L_p$ Minkowski problem characterizing the $L_{p}$ surface area measure was introduced by Lutwak ~\cite{L93} and lies at the heart of the $L_{p}$ Brunn-Minkowski theory. Building upon Lutwak's seminal work, the $L_{p}$ Minkowski problem has been the fertile ground that has yielded many meaningful results (see, e.g. ~\cite{B19,BLYZ12,B17,CL17,CL19,L04,Zhu14,Zhu15,Zh15}).

Analogues of the aforementioned Minkowski type problems and $L_{p}$ Minkowski type problems have emerged in probability theory. The basic geometric invariant in Gaussian probability space is the \emph{Gaussian volume}. Given a convex body $K$ in $\rnnn$, the Gaussian volume $\gamma(K)$ of $K$ is defined by
\[
\gamma(K)=\frac{1}{(\sqrt{2\pi})^{n+1}}\int_{K}e^{-\frac{|X|^{2}}{2}}dX.
\]
 Unlike Lebesgue measure, Gaussian volume is neither translation invariant nor homogeneous. Huang-Xi-Zhao \cite{HXYZ21} established the variational formula of the Gaussian volume and introduced the Gaussian surface area measure as
\begin{equation*}
S(K,\eta)=\frac{1}{(\sqrt{2\pi})^{n+1}}\int_{\nu^{-1}_{K}(\eta)}e^{-\frac{|X|^{2}}{2}}d\mathcal{H}^{n}(X)
	\end{equation*}
for each Borel subset $\eta\subset \sn$.  Here $\mathcal{H}^{n}$ is the $n$-dimensional Hausdorff measure, $\nu_{K}$ is the Gauss map defined on the subset of those points of $\partial K$, and $\nu^{-1}_{K}$ is the inverse Gauss map (see Sec. \ref{Sec2} for a detailed definition). Based on the above fact, they proposed the Gaussian Minkowski problem prescribing the Gaussian surface area measure and derived a normalized solution to the Gaussian Minkowski problem by using  variational arguments \cite{HLYZ10,HLYZ16} and moreover obtained the existence of weak solutions to the non-normalized Gaussian Minkowski problem with the aid of the degree theory method, under the condition that the Gaussian volume exceeds 1/2. Subsequently, research on the Gaussian Minkowski problem has been prolific. As a natural extension of the Gaussian Minkowski problem, the $L_{p}$ Gaussian Minkowski problem  has been  widely studied, such as see \cite{CHW23,FH23,HJ24,IM23,KL23,Lu22,Sh23}. This problem acts as a bridge which connects the $L_{p}$ Minkowski problem to Gaussian probability space, which prescribes the $L_{p}$ Gaussian surface area measure defined as
\begin{equation*}
	S_{p}(K,\eta)=\frac{1}{(\sqrt{2\pi})^{n+1}}\int_{\nu^{-1}_{K}(\eta)}(\langle X, \nu_{K}(X)\rangle)^{1-p}e^{-\frac{|X|^{2}}{2}}d\mathcal{H}^{n}(X)
	\end{equation*}
for each Borel subset $\eta\subset \sn$. If the $L_{p}$ Gaussian surface area measure is proportional to the  spherical Lebesgue measure, the solvability of the $L_{p}$ Gaussian Minkowski problem amounts to attacking the following {M}onge-{A}mp\`ere equation:
\begin{equation}\label{MP}
h^{1-p}\frac{1}{\kappa}e^{-\frac{|Dh|^{2}}{2}}=c, \ {\rm for} \ \  c>0, \quad {\rm on}\ \sn.
\end{equation}

The uniqueness  of solutions to \eqref{MP} plays a critical role in establishing the existence of solutions to the $L_{p}$ Gaussian Minkowski problem with the aid of degree theory, which is helpful to eliminate the Lagrange multiplier that appears in variational approaches. There are the known uniqueness results for the isotropic $L_{p}$ Gaussian Minkowski problem. When $n=1$, $p=1$ and $h\geq 0$, Chen-Hu-Liu-Zhao \cite{CHW23} proved the uniqueness of solutions to \eqref{MP}, building upon an argument of Andrews \cite{AN03}; as an application, this uniqueness result is used to prove the existence of smooth small solutions to the Gaussian Minkowski problem. Later, following similar lines as \cite{CHW23}, Liu \cite{LW24} proved the uniqueness by generalizing the results of \cite{CHW23} to the case $0\leq p<1$ and $n=1$. When $n\geq 2$, Ivaki \cite{Iva23} confirmed Chen-Hu-Liu-Zhao's conjecture by proving the uniqueness for $p=1$ under no extra assumption on the sign of $h$ and for $p>1$ but allowing $h>0$ by utilizing the Heintze-Karcher inequality. Moreover, for $p> -n-1$, uniqueness was shown in the class of origin-centred convex bodies via the local Brunn-Minkowski inequality derived by Ivaki-Milman \cite{IM23}. With an additional assumption on the size of the Gaussian volume (specially, $\geq$ 1/2), the uniqueness for $p\geq 1$ follows directly from the Ehrhard inequality and the characterization of its equality cases, for more details, such as see Shenfeld-Van Handel \cite{SV18} and \cite{FH23,HXYZ21,Lu22}. It is of great interest to investigate whether the origin-centred assumption could be removed for $p<1$ in higher dimensions.

Inspired by \cite{HI24,Iva23,IM23},  here we give the uniqueness result of solutions to \eqref{MP} in the case of $-(n+1)<p<-1$ without the origin-centred assumption on convex bodies by substituting a new suitable test function into the spectral formulation of the local Brunn-Minkowski inequality. Given a convex body $K$ in $\rnnn$, set $R(K)=\max|X|$ for $X\in  K$.  The main result is as follows.
 \begin{theo}\label{MTH}
Let $n\geq 1$. Suppose $-(n+1)< p <-1$. Let $\partial K$ be a smooth, strictly convex hypersurface with the support function $h>0$ and $R(K)\leq1$ such that $h^{1-p}e^{-\frac{|Dh|^{2}}{2}}\frac{1}{\kappa}=c$ for  $c>0$.  Then $\partial K$ is a sphere. In particular, if $c\in (0,e^{-1/2}]$,  there is a constant solution; if $c>e^{-1/2}$, there is no constant solution.

 \end{theo}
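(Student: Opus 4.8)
The plan is to turn \eqref{MP} into a Monge-Amp\`ere equation on $\sn$ and then feed a carefully chosen test function into the spectral (local Brunn-Minkowski) inequality attached to $K$. Writing $A_{ij}=\nabla_i\nabla_j h+h\,\delta_{ij}$ for the matrix of radii of curvature in a local orthonormal frame, one has $1/\kappa=\det A$, so \eqref{MP} reads
\[
\det A=c\,h^{\,p-1}e^{\frac12|Dh|^{2}}\qquad\text{on }\sn .
\]
Here $Dh$ is the Euclidean gradient of the $1$-homogeneous extension of $h$, i.e.\ the boundary point $X$ with outer normal $u$, so that $|Dh|^{2}=h^{2}+|\nabla h|^{2}=|X|^{2}$, with $\nabla$ the covariant derivative on $\sn$. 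The hypothesis $R(K)\le1$ then becomes the pointwise bound $|Dh|^{2}\le1$ on $\sn$, and this confinement is exactly what must compensate for dropping the origin-centred assumption.

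The main tool is the spectral form of the local Brunn-Minkowski inequality exploited in \cite{IM23,Iva23}. Let $C^{ij}$ be the cofactor matrix of $A$; it is divergence free ($\nabla_i C^{ij}=0$) and satisfies $C^{ij}A_{ij}=n\det A$, and the linearized Monge-Amp\`ere operator $L\phi:=C^{ij}(\nabla_i\nabla_j\phi+\phi\,\delta_{ij})$ is self-adjoint with respect to $d\sigma$ and obeys $Lh=n\det A$ together with $L\ell=0$ for every linear function $\ell=\langle v,u\rangle$ (the translation modes). After integrating by parts, the relevant inequality bounds $\int_{\sn}C^{ij}\nabla_i\phi\,\nabla_j\phi\,d\sigma$ below by a weighted $L^{2}$-expression in $\phi$, with equality precisely when $\partial K$ is a sphere, the linear functions accounting for the freedom in the centre. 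The strategy is to substitute $\det A=c\,h^{p-1}e^{\frac12|Dh|^{2}}$ into the weights so that the factors $(1-p)$ and $|Dh|^{2}$ appear explicitly, and then test with a function tuned to the Gaussian factor; the role of $p>-(n+1)$ is to keep the weighted quantities integrable and the inequality applicable.

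The heart of the argument, and the step I expect to be the main obstacle, is the choice of the test function. Taking the natural candidate $\phi=h$ and integrating by parts with $\nabla_i C^{ij}=0$ and $C^{ij}A_{ij}=n\det A$, the quadratic form produces terms carrying the factor $1-p$ (from differentiating $h^{p-1}$) and terms carrying $|Dh|^{2}$ (from differentiating the weight $e^{\frac12|Dh|^{2}}$). The delicate point is to show these combine with a definite sign: for $-(n+1)<p<-1$ one has $1-p>2$, so the $h^{p-1}$-contribution is dominant, while $R(K)\le1$ forces $|Dh|^{2}\le1$ and keeps the Gaussian contribution under control, so the inequality can only close if a nonnegative defect vanishes identically. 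The crucial novelty — the ``new suitable test function'' advertised in the introduction — is the modification of $\phi$ that incorporates the Gaussian weight and thereby forces the translation (linear) modes, which the local inequality does not rigidify on its own, to contribute nonnegatively as well; it is here that the confinement $R(K)\le1$ and the range $-(n+1)<p<-1$ are genuinely used to replace the origin-centred hypothesis of \cite{IM23}.

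Once equality in the local Brunn-Minkowski inequality is forced, its equality characterization yields $A=\lambda\,g$ for a constant $\lambda$, i.e.\ $\partial K$ is a round sphere; substituting a round sphere back into the equation shows it must be centred at the origin, so $h$ is constant. It then remains to analyze the constant solutions. For $h\equiv r$ one has $\det A=r^{n}$ and $|Dh|^{2}=r^{2}$, so \eqref{MP} becomes $g(r):=r^{\,n+1-p}e^{-r^{2}/2}=c$. On $(0,1]$ we compute $(\log g)'(r)=\frac{n+1-p}{r}-r\ge (n+1-p)-1=n-p>0$, so $g$ increases strictly from $g(0^{+})=0$ to $g(1)=e^{-1/2}$. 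Hence for each $c\in(0,e^{-1/2}]$ there is a unique $r\in(0,1]$ giving a constant (spherical) solution, while for $c>e^{-1/2}$ no admissible radius exists and there is no constant solution, as claimed.
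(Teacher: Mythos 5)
Your proposal has the right skeleton --- the spectral form of the local Brunn--Minkowski inequality (Lemma \ref{SF} with $k=n$), a Gaussian-adapted test function, and a monotonicity analysis of $g(r)=r^{n+1-p}e^{-r^{2}/2}$ on $(0,1]$ for the constant solutions (your computation $(\log g)'(r)\geq n-p>0$ is correct, and in fact cleaner than the paper's) --- but the heart of the proof is missing. You never specify the test function: you try $\phi=h$ as a ``natural candidate,'' then assert that the real argument needs ``a modification of $\phi$ that incorporates the Gaussian weight'' and that a ``nonnegative defect'' must vanish, without saying what the modification is or carrying out any computation that produces a sign. That is precisely the content of the theorem, and you explicitly defer it (``the step I expect to be the main obstacle''). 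Moreover, your closing rigidity step is not available: equality in Lemma \ref{SF} characterizes $f=\langle x/h,v\rangle$, i.e.\ the translation modes, not umbilicity of $\partial K$, so ``equality forces $A=\lambda g$'' is not a valid way to finish even if the inequality were saturated.

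For comparison, the paper's test functions are the recentred, Gaussian-weighted coordinates of the position vector: for an orthonormal basis $\{E_{l}\}$ of $\rnnn$ and $X=Dh$,
\[
f_{l}=e^{-\frac{|X|^{2}}{2}}\langle X,E_{l}\rangle-\frac{\int_{\sn}e^{-\frac{|X|^{2}}{2}}\langle X,E_{l}\rangle\,dV_{n}}{\int_{\sn}dV_{n}},
\]
summed over $l$ (Lemma \ref{IN}). The derivatives hitting the weight $e^{-|X|^{2}/2}$ produce, after substituting the equation, exactly the terms
\[
\int_{\sn}\bigl(|X|^{2}-1\bigr)\,|X|\,e^{-|X|^{2}}h\langle\nabla h,\nabla|X|\rangle\,dV_{n},
\]
which are $\leq 0$ because $|X|\langle\nabla h,\nabla|X|\rangle=\sum_{i}\lambda_{i}h_{i}^{2}\geq 0$ and $R(K)\leq 1$; this, and only this, is where the confinement hypothesis enters. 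The translation modes are not controlled by any positivity built into the test function, as you suggest, but by using the isotropic equation and integration by parts to identify the barycenter term, $\int_{\sn}e^{-|X|^{2}/2}X\,dV_{n}=\frac{n+1+p}{n}\int_{\sn}e^{-|X|^{2}/2}\nabla h\,dV_{n}$, and then absorbing it via Cauchy--Schwarz; this is where $-(n+1)<p<-1$ enters, yielding the positive coefficient $\frac{(n+1+p)(-1-p)}{n}$ in front of $\int_{\sn}e^{-|X|^{2}}|\nabla h|^{2}dV_{n}$. The conclusion is then $\nabla h\equiv 0$ directly --- no appeal to the equality case at all. Without the explicit test function, the weight-derivative computation, and the Cauchy--Schwarz absorption of the barycenter term, your outline does not constitute a proof.
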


 It should be noted that, due to the limitation on the spectral form of the local Brunn-Minkowski inequality, this technique is not applicable for cases where $R(K)>1$. Indeed, the condition $R(K)\leq 1$ in Theorem \ref{MTH} arises naturally within this framework. To see this aspect, it is clear that for $-(n+1)<p<-1$, the density of the $L_{p}$ Gaussian surface area measure of a centered ball of radius $r$ is $\frac{1}{(\sqrt{2\pi})^{n+1}}e^{-\frac{r^{2}}{2}}r^{n+1-p}$. Observe that when $r\rightarrow 0^{+}$ or $r\rightarrow\infty$, $e^{-\frac{r^{2}}{2}}r^{n+1-p}\rightarrow 0$. This asymptotic behavior indicates that solutions to the related $L_{p}$ Gaussian Minkowski problem may not be unique without additional conditions.  Extending Theorem \ref{MTH} would require developing alternative tools, and we may provide some heuristic insights on this topic in Appendix A. Furthermore, the uniqueness result in Theorem \ref{MTH} is essential to ensure the well-definedness of degree theory (see, e.g \cite{LY89}) and to validate the feasibility of the degree-theoretic approach in Gaussian distribution settings, as first shown by Huang-Xi-Zhao \cite{HXYZ21}, thereby helps study the existence of small solutions to the non-normalized $L_{p}$ Gaussian Minkowski problem in the non-symmetric case for $-(n+1)<p<-1$.

\section{Preliminaries}
\label{Sec2}

The theory of convex bodies is well-covered in some standard references, such as the books of  Gardner \cite{G06} and Schneider \cite{S14}.

Write ${\rnnn}$ for the $(n+1)$-dimensional Euclidean space.   For $Y, Z\in {\rnnn}$, $ \langle Y,Z\rangle $ denotes the standard inner product. For vectors $X\in{\rnnn}$, $|X|=\sqrt{ \langle X, X}\rangle$ is the Euclidean norm.  Let ${\sn}$ be the unit sphere. A convex body is a compact convex set of ${\rnnn}$ with non-empty interior.

 Given a convex body $K$ in $ \rnnn$, for $x\in{\sn}$, the support function of $K$ (with respect to the origin) is defined by
\[
h(x):=h_{K}(x)=\max\{\langle x, Y\rangle:Y \in K\}.
\]

 Given a convex body $K$ in $\rnnn$, for $\mathcal{H}^{n}$ almost all $X\in \partial K$, the unit outer normal of $K$ at $X$ is unique. In such case, denote by $\nu_{K}$ the Gauss map that takes $X\in \partial K$ to its unique unit outer normal.
  For $\omega\subset {\sn}$, the inverse Gauss map $\nu_{K}$ is expressed as
\begin{equation*}
\nu^{-1}_{K}(\omega)=\{X\in \partial K:  \nu_{K}(X) {\rm \ is \ defined \ and }\ \nu_{K}(X)\in \omega\}.
\end{equation*}

For a smooth and strictly convex body $K$, i.e., its boundary is $C^{\infty}$-smooth and is of positive Gauss curvature, we abbreviate $\nu^{-1}_{K}$ as $F$ for brevity. Then the support function of $K$ can be written as
\begin{equation}\label{hhom}
h(x)=\langle x, F(x)\rangle=\langle\nu_{K}(X), X\rangle, \ {\rm where} \ x\in {\sn}, \ \nu_{K}(X)=x \ {\rm and} \ X\in \partial K.
\end{equation}
 Let $\{e_{1},e_{2},\ldots, e_{n}\}$ be a local orthonormal frame on ${\sn}$, $h_{i}$ and $h_{ij}$ be the first-order and second-order covariant derivatives of $h(\cdot)$  with respect to a local orthonormal frame on ${\sn}$. Differentiating \eqref{hhom} with respect to $e_{i}$, we get
\[
h_{i}=\langle e_{i}, F(x)\rangle+\langle x, F_{i}(x)\rangle.
\]
Since $F_{i}$ is tangent to $ \partial K$ at $F(x)$, we obtain
\begin{equation}\label{Fi}
h_{i}=\langle e_{i}, F(x)\rangle.
\end{equation}
By \eqref{hhom} and \eqref{Fi},
\begin{equation}\label{Fdef}
F(x)=\sum_{i} h_{i}e_{i}+hx=\nabla h+hx,
\end{equation}
where $\nabla$ is the (standard) spherical gradient. By extending $h(\cdot)$ to $\rnnn$ as a 1-homogeneous function and restricting the gradient of $h(\cdot)$ on $\sn$ to yield (see, e.g. \cite{CY76})
\begin{equation}\label{hf}
D h(x)=F(x), \ \forall x\in{\sn},
\end{equation}
where $D$ is the gradient operator in $\rnnn$. Utilizing \eqref{Fdef} and \eqref{hf}, we also have (see, e.g. \cite[p. 382]{J91})
\begin{equation*}\label{hgra}
D h(x)=\sum_{i}h_{i}e_{i}+hx, \quad F_{i}(x)=\sum_{j}(h_{ij}+h\delta_{ij})e_{j}.
\end{equation*}

Let $\sigma_{k}$ ($1\leq k \leq n$) be the $k$-th elementary symmetric function of principal radii of curvature, and let $\lambda=(\lambda_{1},\ldots, \lambda_{n})$ be the eigenvalues of matrix $\{h_{ij}+h\delta_{ij}\}$, which are the principal radii of curvature at the point $X(x)\in \partial K$. Then $\sigma_{1}=\Delta h+nh$, where $\Delta$ is the spherical Laplace operator, and the Gauss curvature of $\partial K$, $\kappa$, is expressed as
\begin{equation*}
\kappa=\frac{1}{\sigma_{n}}=\frac{1}{\det(h_{ij}+h\delta_{ij})}.
\end{equation*}

\section{Uniqueness of solutions to the isotropic $L_{p}$ Gaussian Minkowski problem}

The  lemma below gives the spectral formulation of the Aleksandrov-Fenchel inequality. This spectral formulation traces its roots to Hilbert's work (see, e.g. \cite{AN97, AB20}) and has been subsequently investigated in \cite{IM23, KM22, ML22}.

\begin{lem}\label{SF}\cite{AN97, AB20}
Let $f\in C^{2}(\sn)$ with $\int_{\sn}fh\sigma_{k}d\sigma=0$. Then we have
\begin{equation*}
k\int_{\sn}f^{2}h\sigma_{k}d\sigma\leq \int_{\sn}\sum_{i,j}h^{2}\sigma^{ij}_{k}\nabla_{i}f\nabla_{j}fd\sigma,
\end{equation*}
where $\sigma^{ij}_{k}=\frac{\partial \sigma_{k}}{\partial b_{ij}}$ with $b_{ij}:=h_{ij}+h\delta_{ij}$. Equality holds if and only if for some vector $v\in \rnnn$, we have
\[
f(x)=\langle \frac{x}{h(x)},v\rangle,  \quad \forall x\in \sn.
\]
\end{lem}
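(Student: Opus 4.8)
The plan is to read the stated inequality as a spectral gap estimate for a self-adjoint elliptic operator on $\sn$ and to recover it, together with its equality cases, from the Aleksandrov--Fenchel inequality in its infinitesimal (second-variation) form.

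First I would package the two sides as quadratic forms: set
\[
Q(f,g)=\int_{\sn}\sum_{i,j}h^{2}\sigma_{k}^{ij}\nabla_{i}f\nabla_{j}g\,d\sigma,\qquad P(f,g)=\int_{\sn}fgh\sigma_{k}\,d\sigma,
\]
and introduce the operator $\mathcal{L}f=-\tfrac{1}{h\sigma_{k}}\sum_{i,j}\nabla_{i}(h^{2}\sigma_{k}^{ij}\nabla_{j}f)$. Since $\sn$ is closed, integration by parts gives $Q(f,g)=P(\mathcal{L}f,g)$, so $\mathcal{L}$ is self-adjoint with respect to the weight $h\sigma_{k}\,d\sigma$. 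Because $K$ is smooth and strictly convex, the matrix $b_{ij}=h_{ij}+h\delta_{ij}$ is positive definite, whence $\sigma_{k}^{ij}$ is positive definite (G\aa rding); together with $h>0$ this makes $\mathcal{L}$ elliptic and nonnegative, with discrete spectrum and kernel exactly the constants (eigenvalue $0$). The constraint $\int_{\sn}fh\sigma_{k}\,d\sigma=0$ is precisely $f\perp\ker\mathcal{L}$, so the asserted inequality is equivalent to the spectral gap $\lambda_{1}(\mathcal{L})\geq k$, and equality to $f$ being a $k$-eigenfunction.

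Next I would exhibit the extremals and show the gap is attained at $k$. Using $F(x)=\nabla h+hx$, Euler's identity $\sum_{i,j}\sigma_{k}^{ij}b_{ij}=k\sigma_{k}$, and the divergence-free identity $\sum_{j}\nabla_{j}\sigma_{k}^{ij}=0$ (a Codazzi consequence), a direct computation shows that for every $v\in\rnnn$ the function $f_{v}(x)=\langle x,v\rangle/h(x)$ satisfies $\mathcal{L}f_{v}=kf_{v}$. These $f_{v}$ are the infinitesimal generators of the translations $K\mapsto K+tv$, they span an $(n+1)$-dimensional eigenspace, and they certify both $\lambda_{1}(\mathcal{L})\leq k$ and the ``if'' direction of the equality statement. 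As a sanity check, when $K$ is the unit ball one has $h\equiv1$, $\sigma_{k}^{ij}=\binom{n-1}{k-1}\delta_{ij}$, and the claim collapses to the first spherical eigenvalue estimate $\int|\nabla f|^{2}\geq n\int f^{2}$ on $\set{\int f=0}$, whose extremals are the degree-one harmonics $\langle x,v\rangle$.

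The heart of the matter, and the main obstacle, is the lower bound $\lambda_{1}(\mathcal{L})\geq k$: no eigenvalue may lie in $(0,k)$. This is exactly the Aleksandrov--Fenchel content, and I would obtain it by Alexandrov's index method as formulated in \cite{AN97, AB20, KM22}. Concretely, $Q-kP$ restricted to $\set{\int fh\sigma_{k}=0}$ is the second variation of the mixed-volume functional $t\mapsto\int(h+tf)\sigma_{k}(h+tf)\,d\sigma$, and its nonnegativity is the infinitesimal AF inequality. To prove the gap I would connect $K$ to a ball through smooth, strictly convex bodies $K_{s}$, observe that the eigenvalues of $\mathcal{L}_{s}$ vary continuously in $s$, that $k$ persists as an eigenvalue for every $s$ (translations always contribute $f_{v}$), and that $Q_{s}-kP_{s}$ degenerates only on this persistent translational space; hence the signature of $Q_{s}-kP_{s}$ is deformation invariant and can be read off at the ball, where the form is positive semidefinite. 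Finally, the ``only if'' direction of equality is the equality characterization of AF for smooth strictly convex bodies (Alexandrov): the $k$-eigenspace is exactly $\set{\langle x,v\rangle/h:v\in\rnnn}$, so equality forces $f(x)=\langle x/h(x),v\rangle$. I expect the deformation/index step for the lower bound to demand the most care, since establishing that $Q_{s}-kP_{s}$ acquires no extra degeneracies along the path is equivalent to AF itself.
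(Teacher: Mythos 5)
You should first note that the paper itself does not prove Lemma \ref{SF}: it imports the statement from \cite{AN97, AB20}, so the relevant comparison is with the classical Hilbert--Alexandrov argument given there, and your outline does follow that route. Your reductions are correct: $Q(f,g)=P(\mathcal{L}f,g)$ with $\mathcal{L}f=-\frac{1}{h\sigma_{k}}\sum_{i,j}\nabla_{i}(h^{2}\sigma^{ij}_{k}\nabla_{j}f)$ is self-adjoint in the weight $h\sigma_{k}\,d\sigma$; positivity of $\sigma^{ij}_{k}$ on the G{\aa}rding cone makes it elliptic with kernel the constants; the constraint $\int_{\sn}fh\sigma_{k}\,d\sigma=0$ is exactly $P$-orthogonality to the kernel; and the computation $\mathcal{L}f_{v}=kf_{v}$ for $f_{v}=\langle x,v\rangle/h$ is genuinely correct (using $\nabla_{j}\sigma^{ij}_{k}=0$, the identity $(\ell_{v})_{ij}+\ell_{v}\delta_{ij}=0$ for $\ell_{v}=\langle x,v\rangle$, and $\sum_{i,j}\sigma^{ij}_{k}b_{ij}=k\sigma_{k}$), as is your ball model, where the spectrum of $\mathcal{L}$ is $\frac{k}{n}\cdot j(j+n-1)$ and the gap at $k$ has multiplicity $n+1$.

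The genuine gap is the step you yourself flag: in the deformation argument you assert that $Q_{s}-kP_{s}$ ``degenerates only on this persistent translational space,'' i.e.\ that the $k$-eigenspace of $\mathcal{L}_{s}$ has multiplicity exactly $n+1$ for \emph{every} smooth strictly convex body along the path. Without this, an eigenvalue can cross $k$ at some $s_{0}$ (momentarily inflating the multiplicity beyond $n+1$) and slip into $(0,k)$, so continuity of the spectrum from the ball proves nothing; and as you concede, the needed non-degeneracy is equivalent to the Aleksandrov--Fenchel statement itself, which makes the argument as written circular. The same circularity infects your ``only if'' equality direction, which you resolve by quoting Alexandrov's equality characterization --- again the content being proved. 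What the cited proofs supply, and what your proposal is missing, is an \emph{independent} proof of the multiplicity statement: one shows directly that any solution of $\mathcal{L}f=kf$ satisfies $f=\langle x,v\rangle/h$, either by Alexandrov's induction using the pointwise hyperbolicity and equality cases of the mixed-discriminant inequality $D(W,V,\ldots)^{2}\geq D(W,W,\ldots)D(V,V,\ldots)$, or (as in Andrews's treatment) by a strong-maximum-principle argument applied to the eigenfunction equation. Supplying that lemma is the actual work of the proof; everything else in your outline is correct but peripheral to it.
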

Utilizing Lemma \ref{SF} with $k=n$, we have the following result, which is the main ingredient of proving Theorem \ref{MTH}.
\begin{lem}\label{IN}
Let $X=Dh:\sn\rightarrow \partial K$ and $\alpha\in \rn$. Then we have
\begin{equation}
\begin{split}
\label{aa7}
&n\int_{\sn}e^{-|X|^{\alpha}}|X|^{2}dV_{n}-n\frac{\Big|\int_{\sn}e^{-\frac{|X|^{\alpha}}{2}}X dV_{n}\Big|^{2}}{\int_{\sn}dV_{n}}\\
&\leq \int_{\sn}e^{-|X|^{\alpha}}h(\Delta h+nh)dV_{n}+\frac{\alpha^{2}}{4}\int_{\sn}|X|^{2\alpha-1}e^{-|X|^{\alpha}}h\langle \nabla h, \nabla |X|\rangle dV_{n}\\
&\quad -\alpha\int_{\sn}|X|^{\alpha-1}e^{-|X|^{\alpha}}h\langle \nabla h, \nabla |X|\rangle dV_{n},
\end{split}
\end{equation}
where $dV_{n}=h\sigma_{n}d\sigma$.
\end{lem}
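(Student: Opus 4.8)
The plan is to apply Lemma \ref{SF} with $k=n$ to a family of test functions indexed by the standard orthonormal basis $\{E_{1},\dots,E_{n+1}\}$ of $\rnnn$, and then sum the resulting inequalities over that basis. Writing $g=e^{-|X|^{\alpha}/2}$, for each $m$ I would take
\[
f_{m}=g\,\langle X,E_{m}\rangle-c_{m},\qquad c_{m}=\frac{1}{\int_{\sn}dV_{n}}\int_{\sn}g\,\langle X,E_{m}\rangle\,dV_{n},
\]
so that $\int_{\sn}f_{m}h\sigma_{n}\,d\sigma=\int_{\sn}f_{m}\,dV_{n}=0$ and Lemma \ref{SF} applies. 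Since the constant $c_{m}$ drops out of the gradient, $\nabla_{i}f_{m}=\nabla_{i}\big(g\langle X,E_{m}\rangle\big)$, which keeps the right-hand side clean.

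For the left-hand side I would expand $n\int_{\sn}f_{m}^{2}\,dV_{n}$ and use the definition of $c_{m}$ to see that the linear and constant terms combine into $-n c_{m}^{2}\int dV_{n}$, giving
\[
n\int_{\sn}f_{m}^{2}\,dV_{n}=n\int_{\sn}e^{-|X|^{\alpha}}\langle X,E_{m}\rangle^{2}\,dV_{n}-n\frac{\big(\int_{\sn}g\langle X,E_{m}\rangle\,dV_{n}\big)^{2}}{\int_{\sn}dV_{n}}.
\]
Summing over $m$ and invoking $\sum_{m}\langle X,E_{m}\rangle^{2}=|X|^{2}$ together with $\sum_{m}\big(\int_{\sn}g\langle X,E_{m}\rangle\,dV_{n}\big)^{2}=\big|\int_{\sn}g\,X\,dV_{n}\big|^{2}$ reproduces exactly the left-hand side of \eqref{aa7}.

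The substance is the right-hand side. Using \eqref{Fi}--\eqref{hgra} I would record $\langle X,e_{k}\rangle=h_{k}$, $F_{i}=\sum_{j}b_{ij}e_{j}$ with $b_{ij}=h_{ij}+h\delta_{ij}$, and $|X|\nabla_{k}|X|=\langle X,F_{k}\rangle=\sum_{j}b_{kj}h_{j}$. Writing $\nabla_{i}f_{m}=g(P_{i}+Q_{i})$ with $P_{i}=-\tfrac{\alpha}{2}|X|^{\alpha-1}\langle X,E_{m}\rangle\nabla_{i}|X|$ and $Q_{i}=\sum_{k}b_{ik}\langle e_{k},E_{m}\rangle$, the quadratic form $\sum_{i,j}h^{2}\sigma_{n}^{ij}\nabla_{i}f_{m}\nabla_{j}f_{m}$ splits into the pieces $QQ$, $PQ+QP$, and $PP$. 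The entire computation rests on the cofactor identity $\sum_{j}\sigma_{n}^{ij}b_{jk}=\sigma_{n}\delta_{ik}$, which collapses $QQ$ to $\sigma_{n}\sum_{j,l}b_{jl}\langle e_{j},E_{m}\rangle\langle e_{l},E_{m}\rangle$, the cross term to $-\alpha|X|^{\alpha-1}\langle X,E_{m}\rangle\sigma_{n}\langle\nabla|X|,E_{m}\rangle$, and, in the key step, the $PP$ piece via
\[
\sum_{i,j}\sigma_{n}^{ij}\nabla_{i}|X|\nabla_{j}|X|=\frac{\sigma_{n}}{|X|}\langle\nabla|X|,\nabla h\rangle,
\]
which follows by substituting $\nabla_{j}|X|=\tfrac{1}{|X|}\sum_{l}b_{jl}h_{l}$ and applying the cofactor identity a second time.

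Finally I would sum over $m$, using $\sum_{m}\langle e_{j},E_{m}\rangle\langle e_{l},E_{m}\rangle=\delta_{jl}$ so that the $QQ$ term becomes $\sigma_{n}\sum_{k}b_{kk}=\sigma_{n}(\Delta h+nh)$, and $\sum_{m}\langle X,E_{m}\rangle\langle\nabla|X|,E_{m}\rangle=\langle X,\nabla|X|\rangle=\langle\nabla h,\nabla|X|\rangle$ via $\langle X,e_{k}\rangle=h_{k}$. Reinstating the factor $g^{2}h^{2}=e^{-|X|^{\alpha}}h^{2}$ and integrating against $d\sigma=dV_{n}/(h\sigma_{n})$ turns the three pieces into precisely the three terms on the right-hand side of \eqref{aa7}, and the stated inequality then drops out of Lemma \ref{SF}. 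I expect the main obstacle to be the bookkeeping in the $PP$ and cross terms: one must check that the cofactor identity together with $|X|\nabla_{k}|X|=\sum_{j}b_{kj}h_{j}$ makes these collapse to \emph{exact} equalities — in particular the displayed identity for the $PP$ piece — so that no residual error term survives and the three contributions align term-by-term with \eqref{aa7}.
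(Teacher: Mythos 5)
Your proposal is correct and follows essentially the same route as the paper: the same test functions $f_{m}=e^{-|X|^{\alpha}/2}\langle X,E_{m}\rangle-c_{m}$ fed into Lemma \ref{SF} with $k=n$ and summed over an orthonormal basis of $\rnnn$, with the quadratic form expanding into the same three contributions. The only cosmetic difference is that the paper diagonalizes $b_{ij}=\lambda_{i}\delta_{ij}$ at a point and uses $\frac{\partial\sigma_{n}}{\partial\lambda_{i}}\lambda_{i}=\sigma_{n}$, $\sum_{i}\frac{\partial\sigma_{n}}{\partial\lambda_{i}}\lambda_{i}^{2}=\sigma_{1}\sigma_{n}$, whereas you work frame-independently via the cofactor identity $\sum_{j}\sigma_{n}^{ij}b_{jk}=\sigma_{n}\delta_{ik}$ --- these are the same identities in different coordinates, and your $PP$, $PQ$, $QQ$ pieces match the paper's three terms exactly.
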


\begin{proof}
Let $\{E_{l}\}^{n+1}_{l=1}$ be an orthonormal basis of $\rnnn$. Suppose $\{e_{i}\}^{n}_{i=1}$ is a local orthonormal frame of $\sn$ such that $(h_{ij}+h\delta_{ij})(x_{0})=\lambda_{i}(x_{0})\delta_{ij}$. Motivated by Ivaki-Milman \cite[Lemma 3.2]{IM23}, for $l=1,\ldots,n+1$, we set
the functional $f_{l}:\sn\rightarrow \rn$ as
\begin{equation*}
f_{l}(x)=e^{-\frac{|X|^{\alpha}}{2}}\langle X(x),E_{l}\rangle-\frac{\int_{\sn}e^{-\frac{|X|^{\alpha}}{2}}\langle X(x),E_{l}\rangle dV_{n}}{\int_{\sn}dV_{n}}.
\end{equation*}
Due to $\int_{\sn}f_{l}dV_{n}=0$ for $1\leq l \leq n+1$, by using Lemma \ref{SF} to $f_{l}$ and summing over $l$, we derive
\begin{equation}\label{xz}
n\sum_{l}\int_{\sn}f^{2}_{l}dV_{n}=n\left( \int_{\sn}e^{-|X|^{\alpha}}|X|^{2}dV_{n}-\frac{\Big|\int_{\sn}e^{-\frac{|X|^{\alpha}}{2}}X dV_{n}\Big|^{2}}{\int_{\sn}dV_{n}} \right)\leq \sum_{l,i,j}\int_{\sn}h^{2}\sigma^{ij}_{n}\nabla_{i}f_{l}\nabla_{j}f_{l}d\sigma.
\end{equation}
Since $\nabla_{i}X=\sum_{j}(h_{ij}+h\delta_{ij})e_{j}=\lambda_{i}e_{i}$ at $x_{0}$. Then $\langle e_{i}, X\rangle=h_{i}$ and $\sum_{i}\lambda_{i}\langle e_{i},X\rangle^{2}=|X|\langle \nabla h, \nabla |X|\rangle$ at $x_{0}$. Using $\sum_{i}\frac{\partial \sigma_{n}}{\partial \lambda_{i}}\lambda^{2}_{i}=\sigma_{1}\sigma_{n}$ and $\frac{\partial \sigma_{n}}{\partial \lambda_{i}}\lambda_{i}=\sigma_{n}$ for $\forall i$, we get
\begin{equation}
\begin{split}
\label{xs}
\sum_{l,i,j}\sigma^{ij}_{n}\nabla_{i}f_{l}\nabla_{j}f_{l}&=\sum_{l,i}\frac{\partial \sigma_{n}}{\partial \lambda_{i}}\left((\nabla_{i}(e^{-\frac{|X|^{\alpha}}{2}}))\langle X, E_{l}\rangle+e^{-\frac{|X|^{\alpha}}{2}}\langle \nabla_{i}X,E_{l}\rangle\right)^{2}\\
&=\sum_{l,i}\frac{\partial \sigma_{n}}{\partial \lambda_{i}}\left(-e^{-\frac{|X|^{\alpha}}{2}}\frac{1}{2}(\nabla_{i}|X|^{\alpha})\langle X, E_{l}\rangle+e^{-\frac{|X|^{\alpha}}{2}}\langle \nabla_{i}X,E_{l}\rangle\right)^{2}\\
&=\sum_{l,i}\frac{\partial \sigma_{n}}{\partial \lambda_{i}}\left(-\frac{1}{2}\alpha |X|^{\alpha-2}e^{-\frac{|X|^{\alpha}}{2}}\langle \lambda_{i}e_{i},X\rangle\langle X, E_{l}\rangle+e^{-\frac{|X|^{\alpha}}{2}}\langle \lambda_{i}e_{i},E_{l}\rangle\right)^{2}\\
&=\sum_{i}\frac{\partial \sigma_{n}}{\partial \lambda_{i}}\left(\frac{\alpha^{2}}{4}e^{-|X|^{\alpha}}|X|^{2\alpha-2}\lambda^{2}_{i}\langle X,e_{i}\rangle^{2}+e^{-|X|^{\alpha}}\lambda^{2}_{i}-\alpha|X|^{\alpha-2}\lambda^{2}_{i}e^{-|X|^{\alpha}}\langle X,e_{i}\rangle^{2}\right)\\
&=e^{-|X|^{\alpha}}\sigma_{1}\sigma_{n}+\frac{\alpha^{2}}{4}\sigma_{n}e^{-|X|^{\alpha}}|X|^{2\alpha-1}\langle \nabla h, \nabla |X|\rangle-\alpha|X|^{\alpha-1}\sigma_{n}e^{-|X|^{\alpha}}\langle\nabla h, \nabla |X|\rangle.
\end{split}
\end{equation}
Thus, applying \eqref{xs} into \eqref{xz}, and employing $dV_{n}=h\sigma_{n}d\sigma$,  we obtain
\begin{equation*}
\begin{split}
&n\int_{\sn}e^{-|X|^{\alpha}}|X|^{2}dV_{n}-n\frac{\Big|\int_{\sn}e^{-\frac{|X|^{\alpha}}{2}}X dV_{n}\Big|^{2}}{\int_{\sn}dV_{n}}\\
&\leq\int_{\sn}h^{2}e^{-|X|^{\alpha}}\sigma_{1}\sigma_{n}d\sigma+\frac{\alpha^{2}}{4}\int_{\sn}|X|^{2\alpha-1}e^{-|X|^{\alpha}}h^{2}\sigma_{n}\langle \nabla h, \nabla |X|\rangle d\sigma\\
&\quad -\alpha \int_{\sn}|X|^{\alpha-1}e^{-|X|^{\alpha}}h^{2}\sigma_{n}\langle \nabla h, \nabla |X|\rangle d\sigma\\
&=\int_{\sn}e^{-|X|^{\alpha}}h(\Delta h+nh)dV_{n}+\frac{\alpha^{2}}{4}\int_{\sn}|X|^{2\alpha-1}e^{-|X|^{\alpha}}h\langle \nabla h, \nabla |X|\rangle dV_{n}\\
&\quad -\alpha\int_{\sn}|X|^{\alpha-1}e^{-|X|^{\alpha}}h\langle \nabla h, \nabla |X|\rangle dV_{n}.
\end{split}
\end{equation*}
The proof is complete.
\end{proof}
Using Lemma \ref{IN}, we can get the following equivalent form.

\begin{lem}
Let $X=Dh:\sn\rightarrow \partial K$ and $\alpha\in \rn$. Then we have
\begin{equation}
\begin{split}
\label{iu}
&\int_{\sn}\langle hX,  \nabla \log \frac{h^{n+2}}{\kappa}\rangle e^{-|X|^{\alpha}}dV_{n}
\leq n\frac{\Big|\int_{\sn}e^{-\frac{|X|^{\alpha}}{2}}X dV_{n}\Big|^{2}}{\int_{\sn}dV_{n}} + \frac{\alpha^{2}}{4}\int_{\sn}|X|^{2\alpha-1}e^{-|X|^{\alpha}}h\langle \nabla h, \nabla |X|\rangle dV_{n}.
\end{split}
\end{equation}
\end{lem}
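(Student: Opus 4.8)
The plan is to obtain \eqref{iu} directly from Lemma \ref{IN}, with no new inequality required: I will show that the left-hand side of \eqref{iu} is exactly equal, after one integration by parts on $\sn$, to the combination
\[
n\int_{\sn}e^{-|X|^{\alpha}}|X|^{2}dV_{n}-\int_{\sn}e^{-|X|^{\alpha}}h(\Delta h+nh)dV_{n}+\alpha\int_{\sn}|X|^{\alpha-1}e^{-|X|^{\alpha}}h\langle\nabla h,\nabla|X|\rangle dV_{n}.
\]
Granting this identity, \eqref{iu} is merely \eqref{aa7} with the term $\int_{\sn}e^{-|X|^{\alpha}}h(\Delta h+nh)dV_{n}$ and the term $-\alpha\int_{\sn}|X|^{\alpha-1}e^{-|X|^{\alpha}}h\langle\nabla h,\nabla|X|\rangle dV_{n}$ transposed to the left-hand side, so the inequality is inherited from Lemma \ref{IN}.

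First I would expand the integrand on the left of \eqref{iu}. Writing $\log\frac{h^{n+2}}{\kappa}=(n+2)\log h+\log\sigma_{n}$ and using $X=\nabla h+hx$ together with the orthogonality $\langle x,\nabla f\rangle=0$ (the spherical gradient is tangent to $\sn$), one gets $\langle X,\nabla h\rangle=|\nabla h|^{2}$ and $\langle X,\nabla\sigma_{n}\rangle=\langle\nabla h,\nabla\sigma_{n}\rangle$, whence
\[
\Big\langle hX,\nabla\log\tfrac{h^{n+2}}{\kappa}\Big\rangle=(n+2)|\nabla h|^{2}+\frac{h}{\sigma_{n}}\langle\nabla h,\nabla\sigma_{n}\rangle.
\]
Multiplying by $e^{-|X|^{\alpha}}$ and by $dV_{n}=h\sigma_{n}d\sigma$ turns the left-hand side of \eqref{iu} into $(n+2)\int_{\sn}e^{-|X|^{\alpha}}|\nabla h|^{2}h\sigma_{n}d\sigma+\int_{\sn}e^{-|X|^{\alpha}}h^{2}\langle\nabla h,\nabla\sigma_{n}\rangle d\sigma$. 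For the target combination I would substitute $|X|^{2}=|\nabla h|^{2}+h^{2}$ and $\Delta h+nh=\sigma_{1}$, write everything against $d\sigma$, and observe that the two contributions $n\int_{\sn}e^{-|X|^{\alpha}}h^{3}\sigma_{n}d\sigma$ cancel, leaving $n\int_{\sn}e^{-|X|^{\alpha}}|\nabla h|^{2}h\sigma_{n}d\sigma-\int_{\sn}e^{-|X|^{\alpha}}h^{2}\sigma_{n}\Delta h\,d\sigma+\alpha\int_{\sn}|X|^{\alpha-1}e^{-|X|^{\alpha}}h^{2}\sigma_{n}\langle\nabla h,\nabla|X|\rangle d\sigma$.

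The one genuine computation is the integration by parts of the middle term on the closed manifold $\sn$: applying $\int_{\sn}\psi\,\Delta h\,d\sigma=-\int_{\sn}\langle\nabla\psi,\nabla h\rangle d\sigma$ with $\psi=e^{-|X|^{\alpha}}h^{2}\sigma_{n}$, and expanding $\nabla\psi$ through the product rule together with $\nabla(e^{-|X|^{\alpha}})=-\alpha|X|^{\alpha-1}e^{-|X|^{\alpha}}\nabla|X|$ and $\nabla(h^{2}\sigma_{n})=2h\sigma_{n}\nabla h+h^{2}\nabla\sigma_{n}$.

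The step that I expect to demand the most care is precisely this integration by parts and the attendant bookkeeping. Two cancellations must be checked: the term proportional to $\alpha|X|^{\alpha-1}\langle\nabla h,\nabla|X|\rangle$ arising from differentiating $e^{-|X|^{\alpha}}$ must annihilate the $+\alpha$ term already present, and the factor $2h\sigma_{n}\nabla h$ must combine with the $n\int_{\sn}e^{-|X|^{\alpha}}|\nabla h|^{2}h\sigma_{n}d\sigma$ to upgrade the coefficient from $n$ to $n+2$. Once these are verified, the surviving term $\int_{\sn}e^{-|X|^{\alpha}}h^{2}\langle\nabla h,\nabla\sigma_{n}\rangle d\sigma$ matches exactly the second piece of the expanded left-hand side of \eqref{iu}, completing the identity and hence \eqref{iu}. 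No convexity or sign hypothesis enters: the result is a pure rewriting of \eqref{aa7}.
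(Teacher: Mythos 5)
Your proposal is correct and takes essentially the same route as the paper: the identity you establish by integration by parts is exactly the paper's intermediate computation (its equation \eqref{aa4}), after which \eqref{iu} follows by transposing terms in \eqref{aa7}, just as the paper does. The only cosmetic difference is the direction of the integration by parts (you move the derivative off $\Delta h$ via $\psi=e^{-|X|^{\alpha}}h^{2}\sigma_{n}$, while the paper moves it off $\nabla\sigma_{n}$), producing the same two cancellations you identify.
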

\begin{proof}
By \eqref{aa7}, we have
\begin{equation}
\begin{split}
\label{aa3}
&\int_{\sn}e^{-|X|^{\alpha}}(n|\nabla h|^{2}-h\triangle h)dV_{n}-n\frac{\Big|\int_{\sn}e^{-\frac{|X|^{\alpha}}{2}}X dV_{n}\Big|^{2}}{\int_{\sn}dV_{n}}\\
&\leq \frac{\alpha^{2}}{4}\int_{\sn}|X|^{2\alpha-1}e^{-|X|^{\alpha}}h\langle \nabla h, \nabla |X|\rangle dV_{n}-\alpha\int_{\sn}|X|^{\alpha-1}e^{-|X|^{\alpha}}h\langle \nabla h, \nabla |X|\rangle dV_{n}.
\end{split}
\end{equation}
On the other hand, employing  $dV_{n}=h\sigma_{n}d\sigma$. By integration by parts, we obtain
\begin{equation}
\begin{split}
\label{aa4}
&\int_{\sn}\langle hX,  \nabla \log \frac{h^{n+2}}{\kappa}\rangle e^{-|X|^{\alpha}}dV_{n}\\
&=(n+2)\int_{\sn}|\nabla h|^{2}e^{-|X|^{\alpha}}dV_{n}+\int_{\sn}\langle h\nabla h, \sigma^{-1}_{n}\nabla \sigma_{n}\rangle e^{-|X|^{\alpha}}h\sigma_{n}d\sigma\\
&=(n+2)\int_{\sn}|\nabla h|^{2}e^{-|X|^{\alpha}}dV_{n}-\int_{\sn}{\rm div}(h\nabla h e^{-|X|^{\alpha}}h)\sigma_{n}d\sigma\\
&=(n+2)\int_{\sn}|\nabla h|^{2}e^{-|X|^{\alpha}}dV_{n}-2\int_{\sn}|\nabla h|^{2}e^{-|X|^{\alpha}}dV_{n}-\int_{\sn}h\triangle he^{-|X|^{\alpha}}dV_{n}\\
&\quad +\alpha\int_{\sn}|X|^{\alpha-1}e^{-|X|^{\alpha}}h^{2}\sigma_{n}\langle \nabla h, \nabla |X|\rangle d\sigma\\
&= n\int_{\sn}|\nabla h|^{2}e^{-|X|^{\alpha}}dV_{n}-\int_{\sn}h\triangle he^{-|X|^{\alpha}}dV_{n}+\alpha\int_{\sn}|X|^{\alpha-1}e^{-|X|^{\alpha}}h\langle \nabla h, \nabla |X|\rangle dV_{n}.
\end{split}
\end{equation}
Applying \eqref{aa4} into \eqref{aa3}, hence we get
\begin{equation*}
\begin{split}
\label{Up3}
&\int_{\sn}\langle hX,  \nabla \log \frac{h^{n+2}}{\kappa}\rangle e^{-|X|^{\alpha}}dV_{n}
\leq n\frac{\Big|\int_{\sn}e^{-\frac{|X|^{\alpha}}{2}}X dV_{n}\Big|^{2}}{\int_{\sn}dV_{n}} + \frac{\alpha^{2}}{4}\int_{\sn}|X|^{2\alpha-1}e^{-|X|^{\alpha}}h\langle \nabla h, \nabla |X|\rangle dV_{n}.
\end{split}
\end{equation*}
\end{proof}
Utilizing above results, we are in a position to prove Theorem \ref{MTH}.

\begin{proof}[Proof of Theorem \ref{MTH}.]
 Let $\alpha=2$ and recall $dV_{n}=h\sigma_{n}d\sigma=ch^{p}e^{\frac{|X|^{\alpha}}{2}}d\sigma$. One hand, by a direct computation, we get
\begin{equation*}
\begin{split}
\int_{\sn}\langle hX,  \nabla \log \frac{h^{n+2}}{\kappa}\rangle e^{-|X|^{\alpha}}dV_{n}&=\int_{\sn}\langle hX\frac{\kappa}{h^{n+2}}, \nabla \frac{h^{n+2}}{\kappa} \rangle e^{-|X|^{\alpha}}dV_{n}\\
&=\int_{\sn}\langle h^{2}X\frac{1}{h^{n+2}}, \nabla \frac{h^{n+2}}{\kappa} \rangle e^{-|X|^{\alpha}}d\sigma\\
&=\int_{\sn}\langle h^{-n}X, \nabla \frac{h^{n+2}}{\kappa}\rangle e^{-|X|^{\alpha}}d\sigma.
\end{split}
\end{equation*}
Since $h^{1-p}e^{-\frac{|X|^{\alpha}}{2}}\frac{1}{\kappa}=c$, $\frac{h^{n+2}}{\kappa}=ce^{\frac{|X|^{\alpha}}{2}}h^{n+1+p}$. It follows that
\begin{equation}
\begin{split}
\label{aa5}
&\int_{\sn}\langle h^{-n}X, \nabla \frac{h^{n+2}}{\kappa}\rangle e^{-|X|^{\alpha}}d\sigma=c\int_{\sn}  h^{-n}\langle X,\nabla (e^{\frac{|X|^{\alpha}}{2}}h^{n+1+p})\rangle e^{-|X|^{\alpha}}d\sigma\\
&=c(n+1+p)\int_{\sn}h^{p}|\nabla h|^{2}e^{-\frac{|X|^{\alpha}}{2}}d\sigma+\frac{\alpha}{2} c\int_{\sn}h^{1+p}\langle \nabla h, \nabla |X|\rangle |X|^{\alpha-1}e^{-\frac{|X|^{\alpha}}{2}}d\sigma\\
&=(n+1+p)\int_{\sn}|\nabla h|^{2}e^{-|X|^{\alpha}}dV_{n}+\frac{\alpha}{2} \int_{\sn}h\langle \nabla h, \nabla |X|\rangle |X|^{\alpha-1}e^{-|X|^{\alpha}}dV_{n}.
\end{split}
\end{equation}
Substituting \eqref{aa5} into \eqref{iu}, there is
\begin{equation}
\begin{split}
\label{m2}
(n+1+p)\int_{\sn}|\nabla h|^{2}e^{-|X|^{\alpha}}dV_{n}&\leq n\frac{\Big|\int_{\sn}e^{\frac{-|X|^{\alpha}}{2}}X dV_{n}\Big|^{2}}{\int_{\sn}dV_{n}}+\frac{\alpha^{2}}{4}\int_{\sn}|X|^{2\alpha-1}e^{-|X|^{\alpha}}h\langle \nabla h, \nabla |X|\rangle dV_{n}\\
&\quad-\frac{\alpha}{2}\int_{\sn}h\langle \nabla h, \nabla |X|\rangle |X|^{\alpha-1}e^{-|X|^{\alpha}}dV_{n}.
\end{split}
\end{equation}
On the other hand,
\begin{equation}
\begin{split}
\label{Up3}
&\int_{\sn}e^{-\frac{|X|^{\alpha}}{2}}X dV_{n}=c\int_{\sn}h^{p}Xd\sigma=c\int_{\sn}h^{p}(\nabla h+ h x)d\sigma(x)\\
&=c\frac{p+1+n}{n}\int_{\sn}h^{p}\nabla h d\sigma=\frac{p+1+n}{n}\int_{\sn}e^{-\frac{|X|^{\alpha}}{2}}\nabla hdV_{n}.
\end{split}
\end{equation}
By Cauchy-Schwarz inequality, there is
\begin{equation}
\begin{split}
\label{m1}
n \frac{\Big|\int_{\sn}e^{-\frac{|X|^{\alpha}}{2}}X dV_{n}\Big|^{2}}{\int_{\sn}dV_{n}}&\leq n\frac{(p+1+n)^{2}}{n^{2}}\frac{(\int_{\sn}e^{-\frac{|X|^{\alpha}}{2}}|\nabla h|dV_{n})^{2}}{\int_{\sn}dV_{n}}\\
&\leq \frac{(p+1+n)^{2}}{n}\int_{\sn}e^{-|X|^{\alpha}}|\nabla h|^{2}dV_{n}.
\end{split}
\end{equation}
Applying \eqref{m1} into \eqref{m2}, we have
\begin{equation}
\begin{split}
\label{yu}
&\frac{(n+1+p)(-1-p)}{n}\int_{\sn}e^{-|X|^{\alpha}}|\nabla h|^{2}dV_{n}\\
&\leq \frac{\alpha^{2}}{4}\int_{\sn}|X|^{2\alpha-1}e^{-|X|^{\alpha}}h\langle \nabla h, \nabla |X|\rangle dV_{n}-\frac{\alpha}{2}\int_{\sn}h\langle \nabla h, \nabla |X|\rangle |X|^{\alpha-1}e^{-|X|^{\alpha}}dV_{n}.
\end{split}
\end{equation}
 Note that $\alpha=2$ and $|X|\langle \nabla h, \nabla |X|\rangle=\sum_{i}\lambda_{i}h^{2}_{i}\geq c_{0} |\nabla h|^{2}$, where $c_{0}>0$ depends on $\partial K$. Due to $\max|X|\leq 1$, by \eqref{yu}, we obtain
\begin{equation*}
\begin{split}
&\frac{(n+1+p)(-1-p)}{n}\int_{\sn}e^{-|X|^{\alpha}}|\nabla h|^{2}dV_{n}\leq 0.
\end{split}
\end{equation*}
 If $-(n+1)< p <-1$, then $|\nabla h|\equiv 0$. Thus $\partial K$ is a sphere. By studying the monotonic properties of function $g(t)=t^{n+1-p}e^{-\frac{t^{2}}{2}}$, we see the number of constant solutions to \eqref{MP}. It is clear to know that $g(t)$ is strictly increasing in $(0,\sqrt{n+1-p}]$, and is strictly decreasing in $(\sqrt{n+1-p},\infty)$, where $\sqrt{n+1-p}>1$ for $-(n+1)< p <-1$ and $n\geq 1$, moreover $\max_{t\in (0, \infty)}g(t)=(n+1-p)^{\frac{n+1-p}{2}}e^{-\frac{n+1-p}{2}}> e^{-\frac{1}{2}}$, hence the proof is completed.
\end{proof}

\appendix
\section{A local version of the Ehrhard inequality}
Similar to  Lemma \ref{SF}, it is an independent interest to see the spectral formulation of the Ehrhard inequality for the Gaussian measure, which can be regarded as a local version of the Ehrhard inequality.

Set $I_{\gamma}:[0,1]\rightarrow \rt$ by $I_{\gamma}:=\varphi \circ \Phi^{-1}$, where $\varphi(t)=\frac{1}{\sqrt{2\pi}}e^{-t^{2}/2}$, $\Phi(t)=\int^{t}_{-\infty}\varphi(s)ds$ and $\Phi^{-1}$ is the inverse function of $\Phi$. By using the Ehrhard inequality on Gaussian volumes, Kolesnikov-Milman \cite{KM2} obtained a class of sharp Poincar\'{e}-type inequalities for the Gaussian measure on the boundary of convex sets as below.

\begin{lem}\cite[Theorem 1.1]{KM2}\label{KM}
Let $g\in C^{2}(\partial K)$. Then we have
\begin{equation*}
\begin{split}
&\frac{1}{(\sqrt{2\pi})^{n+1}}\int_{\partial K}H g^{2}e^{-\frac{|X|^{2}}{2}}d\mathcal{H}^{n}-\frac{1}{(\sqrt{2\pi})^{n+1}}\int_{\partial K}\langle X, \nu_{K}(X)\rangle g^{2}e^{-\frac{|X|^{2}}{2}}d\mathcal{H}^{n}\\
&\quad -(\log I_{\gamma})^{'}(\gamma(K))\left(\frac{1}{(\sqrt{2\pi})^{n+1}}\int_{\partial K}g e^{-\frac{|X|^{2}}{2}}d\mathcal{H}^{n}\right)^{2} \\
&\quad \leq \frac{1}{(\sqrt{2\pi})^{n+1}}\int_{\partial K}\langle \Pi^{-1}_{\partial K}\nabla_{\partial K}g,\nabla_{\partial K} g\rangle e^{-\frac{|X|^{2}}{2}} d\mathcal{H}^{n},
\end{split}
\end{equation*}
where $\nabla_{\partial K}$ denotes the induced Levi-Civita connection on the boundary $\partial K$, $H$ is the mean curvature of $\partial K$ at $X$, $\Pi_{\partial K}^{-1}$ is the inverse of the second fundamental form $\Pi_{\partial K}$ of $\partial K$ at $X$ and $(\log I_{\gamma})^{'}(v)=-\Phi^{-1}(v)/I_{\gamma}(v)$.
\end{lem}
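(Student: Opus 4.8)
The plan is to realize the inequality as the infinitesimal (second-order) form of the Ehrhard inequality, exactly paralleling how the spectral Aleksandrov--Fenchel inequality of Lemma~\ref{SF} is the second variation of the Brunn--Minkowski functional. Fix a smooth, strictly convex $K$ with support function $h$, let $\phi\in C^{2}(\sn)$ be arbitrary, and consider the Minkowski-linear family $K_t$ with support function $h_t=h+t\phi$; for $|t|$ small these are genuine convex bodies, and since support functions add linearly one has $\lambda K_{t_1}+(1-\lambda)K_{t_2}=K_{\lambda t_1+(1-\lambda)t_2}$. The Ehrhard inequality therefore makes $\psi(t):=\Phi^{-1}(\gamma(K_t))$ concave in $t$, so that $\psi''(0)\le 0$. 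The normal speed of $\partial K_t$ is $\dot h_t=\phi$, so under the Gauss map this perturbation corresponds to the boundary function $g=\phi\circ\nu_{K}$, and I will track both the first and second variations of $\gamma(K_t)$ through this correspondence.

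Writing $V(t)=\gamma(K_t)$, the chain rule gives $\psi''(0)=(\Phi^{-1})''(\gamma(K))\,V'(0)^2+(\Phi^{-1})'(\gamma(K))\,V''(0)$. Using $\varphi'(t)=-t\varphi(t)$ one computes $(\Phi^{-1})'(v)=1/I_\gamma(v)$ and $(\Phi^{-1})''(v)=\Phi^{-1}(v)/I_\gamma(v)^2$; multiplying $\psi''(0)\le0$ by $I_\gamma(\gamma(K))>0$ and recalling $(\log I_\gamma)'(v)=-\Phi^{-1}(v)/I_\gamma(v)$ collapses it to
\[
V''(0)-(\log I_\gamma)'(\gamma(K))\,V'(0)^2\le 0 .
\]
Thus the whole problem reduces to identifying $V'(0)$ and $V''(0)$: I expect $V'(0)=\frac{1}{(\sqrt{2\pi})^{n+1}}\int_{\partial K}g\,e^{-|X|^2/2}d\mathcal H^n$, which is the Gaussian first-variation formula of Huang--Xi--Zhao \cite{HXYZ21}, and $V''(0)$ to equal the mean-curvature and support-function integrals minus the $\Pi_{\partial K}^{-1}$ integral; substituting these back produces exactly the claimed inequality.

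The computational heart, and the step I expect to be the main obstacle, is the second variation. Parametrizing $\partial K_t$ by the Gauss map gives $V'(t)=\frac{1}{(\sqrt{2\pi})^{n+1}}\int_{\sn}\phi\,e^{-(|\nabla h_t|^2+h_t^2)/2}\sigma_n(t)\,d\sigma$, and differentiating once more at $t=0$ yields two groups of terms: one from differentiating the Gaussian weight, namely $-\int_{\sn}\phi(\langle\nabla h,\nabla\phi\rangle+h\phi)\,e^{-|F|^2/2}\sigma_n\,d\sigma$, and one from differentiating the area density, namely $\int_{\sn}\phi\,\sigma_n^{ij}(\phi_{ij}+\phi\delta_{ij})\,e^{-|F|^2/2}\,d\sigma$ with $\sigma_n^{ij}=\partial\sigma_n/\partial b_{ij}$ and $b_{ij}=h_{ij}+h\delta_{ij}$. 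I would then integrate the second group by parts on $\sn$, exploiting the divergence-free identity $\sum_i\nabla_i\sigma_n^{ij}=0$ together with the algebraic facts $\sum_j\sigma_n^{ij}b_{jk}=\sigma_n\delta_{ik}$, $\sum_{ij}\sigma_n^{ij}\delta_{ij}=\sigma_{n-1}=H\sigma_n$, and $\tfrac12\nabla_i|F|^2=\sum_j b_{ij}h_j$. The gradient-of-weight contribution produced by this integration by parts cancels exactly the term $-\int_{\sn}\phi\langle\nabla h,\nabla\phi\rangle e^{-|F|^2/2}\sigma_n\,d\sigma$ from the first group; the term $\sum_{ij}\sigma_n^{ij}\delta_{ij}$ supplies the mean-curvature integral, the term $-h\phi^2\sigma_n$ supplies $-\langle X,\nu_K\rangle g^2$, and the term $-\sigma_n^{ij}\nabla_i\phi\nabla_j\phi$ supplies the $\Pi_{\partial K}^{-1}$ integral once the gradient is transported by $\nabla_{\partial K}g=b^{-1}\nabla\phi$, so that $\sigma_n^{ij}\nabla_i\phi\nabla_j\phi=\sigma_n\langle\Pi_{\partial K}^{-1}\nabla_{\partial K}g,\nabla_{\partial K}g\rangle$. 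Converting $\sigma_n\,d\sigma$ to $d\mathcal H^n$ throughout gives $V''(0)$ in the stated boundary form and completes the proof. The delicate points are the bookkeeping of the boundary-versus-sphere gradient transport and the verification that the cross terms cancel rather than leaving a residue.
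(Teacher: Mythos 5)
Your proposal is correct, but note that the paper itself offers no proof of this lemma at all: it is quoted verbatim from Kolesnikov--Milman \cite[Theorem 1.1]{KM2}, so there is no internal argument to compare against. What you have written is, in substance, a reconstruction of the Kolesnikov--Milman derivation: their theorem is precisely the second-order (infinitesimal) form of the Ehrhard inequality along Minkowski-linear families, and your chain-rule reduction is exact --- with $(\Phi^{-1})'(v)=1/I_\gamma(v)$ and $(\Phi^{-1})''(v)=\Phi^{-1}(v)/I_\gamma(v)^2$, concavity of $\psi(t)=\Phi^{-1}(\gamma(K_t))$ collapses to $V''(0)-(\log I_\gamma)'(\gamma(K))V'(0)^2\le 0$ as you claim. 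The variational bookkeeping also checks out: $\nabla_j\tfrac{|F|^2}{2}=\sum_k b_{jk}h_k$ together with $\sum_j\sigma_n^{ij}b_{jk}=\sigma_n\delta_{ik}$ makes the weight-gradient term from the integration by parts equal to $+\int\phi\,\sigma_n\langle\nabla h,\nabla\phi\rangle e^{-|F|^2/2}d\sigma$, which indeed annihilates the corresponding term from differentiating the Gaussian density; $\sum_{i,j}\sigma_n^{ij}\delta_{ij}=\sigma_{n-1}=H\sigma_n$ yields the mean-curvature integral; and since the Weingarten map carries $\nabla_{\partial K}g$ to $b^{-1}\nabla\phi$ while $\Pi_{\partial K}^{-1}=b$ in that frame, the identity $\sigma_n^{ij}\phi_i\phi_j=\sigma_n\langle\Pi_{\partial K}^{-1}\nabla_{\partial K}g,\nabla_{\partial K}g\rangle$ is correct. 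This is, incidentally, the same sphere-versus-boundary dictionary the author uses in reverse in the Appendix to deduce Lemma \ref{SPG} from the present lemma.

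One caveat on generality: your construction produces the inequality only for $g=\phi\circ\nu_K$ with $\phi\in C^2(\sn)$ and for smooth, strictly convex $K$, whereas \cite{KM2} proves it for general convex bodies (with appropriate regularity of $\partial K$). For smooth strictly convex $K$ the Gauss map is a diffeomorphism, so every $g\in C^2(\partial K)$ is of your form and nothing is lost in the setting this paper actually uses; but as a proof of the cited theorem in its stated generality your argument would need an additional approximation step. You should also record explicitly why $\psi$ is twice differentiable at $t=0$ (smooth dependence of $\sigma_n(t)$ and the weight on $t$, plus $b_{ij}(t)>0$ for small $|t|$), since $\psi''(0)\le 0$ is where Ehrhard enters; this is routine in the smooth setting but is the one analytic hypothesis your sketch uses silently.
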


Based on Lemma \ref{KM}, we can get the related spectral formulation of  the Ehrhard inequality for the Gaussian measure.
\begin{lem}\label{SPG}
Let $f\in C^{2}(\sn)$. Then we have
\begin{equation}
\begin{split}
\label{AQM}
n\int_{\sn}f^{2}h\sigma_{n}e^{-\frac{|Dh|^{2}}{2}}d\sigma&\leq \int_{\sn}\sum_{i,j}h^{2}\sigma^{ij}_{n}f_{i}f_{j}e^{-\frac{|Dh|^{2}}{2}}d\sigma +\int_{\sn}f^{2}h^{2}e^{-\frac{|Dh|^{2}}{2}}h\sigma_{n}d\sigma\\
&\quad+ \int_{\sn}\sum_{i,j}\sigma^{ij}_{n}h_{i}|Dh|_{j}|Dh|f^{2}he^{-\frac{|Dh|^{2}}{2}}d\sigma\\
&\quad + (\log I_{\gamma})^{'}(\gamma(K))\frac{1}{(\sqrt{2\pi})^{n+1}}\left( \int_{\sn}fh\sigma_{n}e^{-\frac{|Dh|^{2}}{2}}d\sigma\right)^{2}.
\end{split}
\end{equation}
\end{lem}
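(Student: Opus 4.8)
The plan is to derive the spherical inequality \eqref{AQM} from Lemma \ref{KM} by transporting the boundary integrals to $\sn$ through the Gauss map and choosing the test function so that the four terms of Lemma \ref{KM} match, after some integration by parts, the four terms of \eqref{AQM}. For smooth, strictly convex $K$ the inverse Gauss map $X=Dh:\sn\to\partial K$ is a diffeomorphism with Jacobian $\sigma_{n}=\det(h_{ij}+h\delta_{ij})$, so $d\mathcal{H}^{n}=\sigma_{n}\,d\sigma$; moreover $\langle X,\nu_{K}\rangle=h$ and $|X|=|Dh|$ by \eqref{hhom} and \eqref{hf}. I would first multiply the inequality of Lemma \ref{KM} by $(\sqrt{2\pi})^{n+1}$: this clears the Gaussian constant from the three terms that are linear in the measure $e^{-|X|^{2}/2}d\mathcal{H}^{n}$, while leaving exactly one factor $\frac{1}{(\sqrt{2\pi})^{n+1}}$ on the squared linear term, matching the last line of \eqref{AQM}. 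I would then take the test function $g=hf$ on $\partial K$, i.e. $g\circ X=hf$ on $\sn$.

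With this choice, the term $-\int_{\partial K}\langle X,\nu_{K}\rangle g^{2}e^{-|X|^{2}/2}d\mathcal{H}^{n}$ becomes the second term on the right of \eqref{AQM}, and the squared linear term becomes the last term of \eqref{AQM} once it is transposed to the right-hand side. For the gradient term I would work in a frame diagonalising $b_{ij}=h_{ij}+h\delta_{ij}$: since $\nabla_{i}X=\lambda_{i}e_{i}$, the induced unit principal directions on $\partial K$ are the $e_{i}$ and the eigenvalues of $\Pi_{\partial K}^{-1}$ are the $\lambda_{i}$, whence $\langle\Pi_{\partial K}^{-1}\nabla_{\partial K}g,\nabla_{\partial K}g\rangle=\sigma_{n}^{-1}\sum_{i,j}\sigma_{n}^{ij}\nabla_{i}(hf)\nabla_{j}(hf)$; expanding $\nabla(hf)=f\nabla h+h\nabla f$ isolates the first term $\int_{\sn}\sum h^{2}\sigma_{n}^{ij}f_{i}f_{j}e^{-|Dh|^{2}/2}d\sigma$ of \eqref{AQM} together with the cross terms $B:=\int_{\sn}\sum\sigma_{n}^{ij}f^{2}h_{i}h_{j}e^{-|Dh|^{2}/2}d\sigma$ and $C:=2\int_{\sn}\sum\sigma_{n}^{ij}fh\,h_{i}f_{j}e^{-|Dh|^{2}/2}d\sigma$. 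For the mean-curvature term I would use $H=\sigma_{n-1}/\sigma_{n}$ together with the Euler relation $\sum\sigma_{n}^{ij}b_{ij}=n\sigma_{n}$, which gives $h\sigma_{n-1}=n\sigma_{n}-\sum\sigma_{n}^{ij}h_{ij}$; hence $\int_{\partial K}Hg^{2}e^{-|X|^{2}/2}d\mathcal{H}^{n}$ equals the target left-hand side $n\int_{\sn}f^{2}h\sigma_{n}e^{-|Dh|^{2}/2}d\sigma$ minus the quantity $A:=\int_{\sn}\sum\sigma_{n}^{ij}hf^{2}h_{ij}e^{-|Dh|^{2}/2}d\sigma$.

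After this bookkeeping the whole claim reduces to the single identity $A+B+C=\int_{\sn}\sum\sigma_{n}^{ij}h_{i}|Dh|_{j}|Dh|f^{2}h\,e^{-|Dh|^{2}/2}d\sigma$, i.e. the third term of \eqref{AQM}, and I expect this to be the main obstacle. I would prove it by integrating $A$ by parts: writing $h_{ij}=\nabla_{i}h_{j}$ and moving $\nabla_{i}$ onto the remaining factors, the divergence-free property $\sum_{i}\nabla_{i}\sigma_{n}^{ij}=0$ of the Newton tensor annihilates the derivative hitting $\sigma_{n}^{ij}$, leaving $A=-B-C+\int_{\sn}\sum\sigma_{n}^{ij}h_{i}\,(\nabla_{j}w)\,hf^{2}e^{-w}d\sigma$ with $w:=|Dh|^{2}/2$. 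It then remains to recognise $\nabla_{j}w=\sum_{k}h_{k}b_{kj}=|Dh|\,|Dh|_{j}$ and to apply the cofactor identity $\sum_{j}\sigma_{n}^{ij}b_{kj}=\sigma_{n}\delta_{ik}$, which turns $\sum_{i,j}\sigma_{n}^{ij}h_{i}\nabla_{j}w$ into $\sigma_{n}|\nabla h|^{2}$; this identifies the surviving integral with the third term of \eqref{AQM} and yields $A+B+C$ equal to it. Assembling the four translated terms and transposing the curvature contributions across the inequality then gives \eqref{AQM}. The only delicate points are tracking the signs created when the three curvature terms are moved across the inequality and verifying the pointwise curvature identities in the diagonalising frame before integrating.
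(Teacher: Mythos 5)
Your proposal is correct and follows essentially the same route as the paper: both apply the Kolesnikov--Milman inequality (Lemma \ref{KM}) with the test function $g=fh$, transport every integral to $\sn$ via $d\mathcal{H}^{n}=\sigma_{n}d\sigma$, $\langle X,\nu_{K}\rangle=h$, $|X|=|Dh|$, and then combine the divergence-free property of $\sigma^{ij}_{n}$ with the Euler identity $\sum_{i,j}\sigma^{ij}_{n}b_{ij}=n\sigma_{n}$. The only difference is bookkeeping: you invoke the Euler identity on the mean-curvature term at the outset and close with a single integration by parts on $A=\int_{\sn}\sum_{i,j}\sigma^{ij}_{n}h_{ij}hf^{2}e^{-\frac{|Dh|^{2}}{2}}d\sigma$, whereas the paper integrates by parts twice (on the cross term and on the $\sum_{i,j}\sigma^{ij}_{n}h_{i}h_{j}f^{2}$ term) and applies the Euler identity at the end; the two computations are equivalent rearrangements of the same identity $A+B+C=\int_{\sn}\sum_{i,j}\sigma^{ij}_{n}h_{i}|Dh|_{j}|Dh|f^{2}he^{-\frac{|Dh|^{2}}{2}}d\sigma$.
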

\begin{proof}
By Lemma \ref{KM} and the assumption, we have
\begin{equation*}
\begin{split}
\label{Up3}
&\int_{\sn}\sum_{i}\sigma^{ii}_{n}(fh)^{2}e^{-\frac{|Dh|^{2}}{2}}d\sigma-\int_{\sn}h(fh)^{2}\sigma_{n}e^{-\frac{|Dh|^{2}}{2}}d\sigma\\
\leq &\int_{\sn}\sum_{i,j}\sigma^{ij}_{n}(fh)_{i}(fh)_{j}e^{-\frac{|Dh|^{2}}{2}}d\sigma+(\log I_{\gamma})^{'}(\gamma(K))\frac{1}{(\sqrt{2\pi})^{n+1}}\left( \int_{\sn}fh\sigma_{n}e^{-\frac{|Dh|^{2}}{2}}d\sigma \right)^{2}.
\end{split}
\end{equation*}
It follows that
\begin{equation}
\begin{split}
\label{AS}
&\int_{\sn}\sum_{i}\sigma^{ii}_{n}(fh)^{2}e^{-\frac{|Dh|^{2}}{2}}d\sigma-\int_{\sn}h(fh)^{2}\sigma_{n}e^{-\frac{|Dh|^{2}}{2}}d\sigma
\\
&\leq \int_{\sn}\sum_{i,j}\sigma^{ij}_{n}h^{2}f_{i}f_{j}e^{-\frac{|Dh|^{2}}{2}}d\sigma+2\int_{\sn}\sum_{i,j}\sigma^{ij}_{n}fhh_{i}f_{j}e^{-\frac{|Dh|^{2}}{2}}d\sigma+\int_{\sn}\sum_{i,j}\sigma^{ij}_{n}h_{i}h_{j}f^{2}e^{-\frac{|Dh|^{2}}{2}}d\sigma\\
&\quad + (\log I_{\gamma})^{'}(\gamma(K))\frac{1}{(\sqrt{2\pi})^{n+1}}\left( \int_{\sn}fh\sigma_{n}e^{-\frac{|Dh|^{2}}{2}}d\sigma\right)^{2}.
\end{split}
\end{equation}
One hand, by integration by parts, we obtain
\begin{equation}
\begin{split}
\label{AQ}
&2\int_{\sn}\sum_{i,j}\sigma^{ij}_{n}fhh_{i}f_{j}e^{-\frac{|Dh|^{2}}{2}}d\sigma\\
&=-2\int_{\sn}\sum_{i,j}\sigma^{ij}_{n}h_{ij}f^{2}he^{-\frac{|Dh|^{2}}{2}}d\sigma-2\int_{\sn}\sum_{i,j}\sigma^{ij}_{n}fhh_{i}f_{j}e^{-\frac{|Dh|^{2}}{2}}d\sigma\\
&\quad +2\int_{\sn}\sum_{i,j}\sigma^{ij}_{n}h_{i}f^{2}h|Dh|e^{-\frac{|Dh|^{2}}{2}}|Dh|_{j}d\sigma-2\int_{\sn}\sum_{i,j}\sigma^{ij}_{n}h_{i}h_{j}f^{2}e^{-\frac{|Dh|^{2}}{2}}d\sigma.
\end{split}
\end{equation}
Substituting \eqref{AQ} into \eqref{AS}, we have
\begin{equation}
\begin{split}
\label{AS2}
&\int_{\sn}\sum_{i}\sigma^{ii}_{n}(fh)^{2}e^{-\frac{|Dh|^{2}}{2}}d\sigma-\int_{\sn}h(fh)^{2}\sigma_{n}e^{-\frac{|Dh|^{2}}{2}}d\sigma
\\
&\leq \int_{\sn}\sum_{i,j}\sigma^{ij}_{n}h^{2}f_{i}f_{j}e^{-\frac{|Dh|^{2}}{2}}d\sigma-2\int_{\sn}\sum_{i,j}\sigma^{ij}_{n}h_{ij}f^{2}he^{-\frac{|Dh|^{2}}{2}}d\sigma-2\int_{\sn}\sum_{i,j}\sigma^{ij}_{n}fhh_{i}f_{j}e^{-\frac{|Dh|^{2}}{2}}d\sigma\\
&\quad +2\int_{\sn}\sum_{i,j}\sigma^{ij}_{n}h_{i}f^{2}h|Dh|e^{-\frac{|Dh|^{2}}{2}}|Dh|_{j}d\sigma-\int_{\sn}\sum_{i,j}\sigma^{ij}_{n}h_{i}h_{j}f^{2}e^{-\frac{|Dh|^{2}}{2}}d\sigma\\
&\quad + (\log I_{\gamma})^{'}(\gamma(K))\frac{1}{(\sqrt{2\pi})^{n+1}}\left( \int_{\sn}fh\sigma_{n}e^{-\frac{|Dh|^{2}}{2}}d\sigma\right)^{2}.
\end{split}
\end{equation}
On the other hand, using again integration by parts, we obtain
\begin{equation}
\begin{split}
\label{aa}
&\int_{\sn}\sum_{i,j}\sigma^{ij}_{n}h_{i}h_{j}f^{2}e^{-\frac{|Dh|^{2}}{2}}d\sigma\\
&=-\int_{\sn}\sum_{i,j}\sigma^{ij}_{n}h_{ij}hf^{2}e^{-\frac{|Dh|^{2}}{2}}d\sigma-2\int_{\sn}\sum_{i,j}\sigma^{ij}_{n}fhh_{i}f_{j}e^{-\frac{|Dh|^{2}}{2}}d\sigma\\
&\quad +\int_{\sn}\sum_{i,j}\sigma^{ij}_{n}h_{i}f^{2}h|Dh|e^{-\frac{|Dh|^{2}}{2}}|Dh|_{j}d\sigma.
\end{split}
\end{equation}
Now applying \eqref{aa} into \eqref{AS2}, we get
\begin{equation}
\begin{split}
\label{AS3}
&\int_{\sn}\sum_{i}\sigma^{ii}_{n}(fh)^{2}e^{-\frac{|Dh|^{2}}{2}}d\sigma-\int_{\sn}h(fh)^{2}\sigma_{n}e^{-\frac{|Dh|^{2}}{2}}d\sigma
\\
&\leq \int_{\sn}\sum_{i,j}\sigma^{ij}_{n}h^{2}f_{i}f_{j}e^{-\frac{|Dh|^{2}}{2}}d\sigma-\int_{\sn}\sum_{i,j}\sigma^{ij}_{n}h_{ij}f^{2}he^{-\frac{|Dh|^{2}}{2}}d\sigma +\int_{\sn}\sum_{i,j}\sigma^{ij}_{n}h_{i}f^{2}h|Dh|e^{-\frac{|Dh|^{2}}{2}}|Dh|_{j}d\sigma\\
&\quad + (\log I_{\gamma})^{'}(\gamma(K))\frac{1}{(\sqrt{2\pi})^{n+1}}\left( \int_{\sn}fh\sigma_{n}e^{-\frac{|Dh|^{2}}{2}}d\sigma\right)^{2}.
\end{split}
\end{equation}
Thus \eqref{AS3} becomes
\begin{equation*}
\begin{split}
\label{AS4}
&\int_{\sn}\sum_{i,j}\sigma^{ij}_{n}b_{ij}f^{2}he^{-\frac{|Dh|^{2}}{2}}d\sigma-\int_{\sn}h(fh)^{2}\sigma_{n}e^{-\frac{|Dh|^{2}}{2}}d\sigma
\\
&\leq \int_{\sn}\sum_{i,j}\sigma^{ij}_{n}h^{2}f_{i}f_{j}e^{-\frac{|Dh|^{2}}{2}}d\sigma +\int_{\sn}\sum_{i,j}\sigma^{ij}_{n}h_{i}f^{2}h|Dh|e^{-\frac{|Dh|^{2}}{2}}|Dh|_{j}d\sigma\\
&\quad + (\log I_{\gamma})^{'}(\gamma(K))\frac{1}{(\sqrt{2\pi})^{n+1}}\left( \int_{\sn}fh\sigma_{n}e^{-\frac{|Dh|^{2}}{2}}d\sigma\right)^{2}.
\end{split}
\end{equation*}
Hence
\begin{equation*}
\begin{split}
\label{AS4}
n\int_{\sn}f^{2}h\sigma_{n}e^{-\frac{|Dh|^{2}}{2}}d\sigma&\leq \int_{\sn}\sum_{i,j}\sigma^{ij}_{n}h^{2}f_{i}f_{j}e^{-\frac{|Dh|^{2}}{2}}d\sigma +\int_{\sn}f^{2}h^{2}e^{-\frac{|Dh|^{2}}{2}}h\sigma_{n}d\sigma\\
&\quad+ \int_{\sn}\sum_{i,j}\sigma^{ij}_{n}h_{i}f^{2}h|Dh|e^{-\frac{|Dh|^{2}}{2}}|Dh|_{j}d\sigma\\
&\quad + (\log I_{\gamma})^{'}(\gamma(K))\frac{1}{(\sqrt{2\pi})^{n+1}}\left( \int_{\sn}fh\sigma_{n}e^{-\frac{|Dh|^{2}}{2}}d\sigma\right)^{2}.
\end{split}
\end{equation*}
\end{proof}

Notice that if  $\gamma(K)\geq 1/2$, it follows that $(\log I_{\gamma})^{'}(\gamma(K))\leq 0$, then we have the following corollary.
\begin{coro}
Let $f\in C^{2}(\sn)$. Suppose $\gamma(K)\geq 1/2$. Then there is
\begin{equation*}
\begin{split}
n\int_{\sn}f^{2}h\sigma_{n}e^{-\frac{|Dh|^{2}}{2}}d\sigma&\leq \int_{\sn}\sum_{i,j}h^{2}\sigma^{ij}_{n}f_{i}f_{j}e^{-\frac{|Dh|^{2}}{2}}d\sigma +\int_{\sn}f^{2}h^{2}e^{-\frac{|Dh|^{2}}{2}}h\sigma_{n}d\sigma\\
&\quad+ \int_{\sn}\sum_{i,j}\sigma^{ij}_{n}h_{i}|Dh|_{j}|Dh|f^{2}he^{-\frac{|Dh|^{2}}{2}}d\sigma.
\end{split}
\end{equation*}
\end{coro}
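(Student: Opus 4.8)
The plan is to specialize the Kolesnikov--Milman inequality of Lemma \ref{KM} to the test function $g=fh$, regarded as a function on $\partial K$ through the Gauss map parametrization $F=Dh\colon\sn\to\partial K$, and then to rewrite every boundary integral as an integral over $\sn$. The dictionary I would use is: the area element transforms as $d\mathcal{H}^{n}=\sigma_{n}d\sigma$; the support relation \eqref{hhom} gives $\langle X,\nu_{K}(X)\rangle=h$; the mean curvature satisfies $H\sigma_{n}=\sum_{i}\sigma_{n}^{ii}$, since at a point where $b_{ij}=h_{ij}+h\delta_{ij}$ is diagonal one has $\sigma_{n}^{ii}=\sigma_{n}/\lambda_{i}$ while $H=\sum_{i}1/\lambda_{i}$; and, most importantly, the intrinsic Dirichlet-type term translates as $\langle\Pi_{\partial K}^{-1}\nabla_{\partial K}g,\nabla_{\partial K}g\rangle\,d\mathcal{H}^{n}=\sum_{i,j}\sigma_{n}^{ij}g_{i}g_{j}\,d\sigma$ with $g$ now read as a function on $\sn$. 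Feeding this dictionary into Lemma \ref{KM} and expanding $(fh)_{i}(fh)_{j}=h^{2}f_{i}f_{j}+2fh\,h_{i}f_{j}+f^{2}h_{i}h_{j}$ (using the symmetry of $\sigma_{n}^{ij}$) yields a preliminary inequality in which all objects live on $\sn$.

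The step I expect to require the most care is the translation of $\langle\Pi_{\partial K}^{-1}\nabla_{\partial K}g,\nabla_{\partial K}g\rangle$. Working at a point $x_{0}$ where $b_{ij}=\lambda_{i}\delta_{ij}$, the parametrization satisfies $F_{i}=\lambda_{i}e_{i}$, so the chain rule gives $g_{i}=\langle\nabla_{\partial K}g,F_{i}\rangle=\lambda_{i}\langle\nabla_{\partial K}g,e_{i}\rangle$; identifying $T_{x}\sn$ with $T_{F(x)}\partial K$ (both orthogonal to $x=\nu_{K}$) one gets $\nabla_{\partial K}g=\sum_{i}(g_{i}/\lambda_{i})e_{i}$. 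Since $\Pi_{\partial K}^{-1}$ has the principal radii $\lambda_{i}$ as eigenvalues along the principal directions $e_{i}$, it follows that $\langle\Pi_{\partial K}^{-1}\nabla_{\partial K}g,\nabla_{\partial K}g\rangle=\sum_{i}g_{i}^{2}/\lambda_{i}=\sigma_{n}^{-1}\sum_{i,j}\sigma_{n}^{ij}g_{i}g_{j}$, and multiplying by $d\mathcal{H}^{n}=\sigma_{n}d\sigma$ gives the claimed form.

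It then remains to massage the preliminary inequality into \eqref{AQM}. The obstruction is the mixed term $2\int_{\sn}\sigma_{n}^{ij}fh\,h_{i}f_{j}e^{-|Dh|^{2}/2}d\sigma$, which I would integrate by parts in the index $j$. Two facts make this clean: the divergence-free property of the Newton tensor, $\sum_{j}\nabla_{j}\sigma_{n}^{ij}=0$, and the derivative $\nabla_{j}e^{-|Dh|^{2}/2}=-|Dh|\,|Dh|_{j}e^{-|Dh|^{2}/2}$; together with $\nabla_{j}h_{i}=h_{ij}$ these reduce the mixed term to curvature and lower-order contributions. Substituting back reintroduces a copy of the mixed term with the opposite sign, together with a term $\int_{\sn}\sigma_{n}^{ij}h_{i}h_{j}f^{2}e^{-|Dh|^{2}/2}d\sigma$, which I would eliminate by a second integration by parts of exactly the same flavour.

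Finally, the two curvature contributions on the left collapse into one via Euler's homogeneity identity $\sum_{i,j}\sigma_{n}^{ij}b_{ij}=n\sigma_{n}$: the term $\sum_{i}\sigma_{n}^{ii}(fh)^{2}=\sum_{i,j}\sigma_{n}^{ij}(h\delta_{ij})f^{2}h$ combines with the surviving $\int_{\sn}\sigma_{n}^{ij}h_{ij}f^{2}h$ into $\int_{\sn}\sum_{i,j}\sigma_{n}^{ij}b_{ij}f^{2}h\,e^{-|Dh|^{2}/2}d\sigma=n\int_{\sn}f^{2}h\sigma_{n}e^{-|Dh|^{2}/2}d\sigma$, which is precisely the left-hand side of \eqref{AQM}; the $\langle X,\nu_{K}\rangle$ term moves to the right to give $\int_{\sn}f^{2}h^{2}e^{-|Dh|^{2}/2}h\sigma_{n}d\sigma$, while the squared boundary integral carries the stated coefficient $(\log I_{\gamma})'(\gamma(K))$. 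The whole argument is bookkeeping once the dictionary and the divergence-free identity are in hand; the only genuinely delicate point is the geometric translation of the Dirichlet form described above.
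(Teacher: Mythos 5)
Your argument, as written, stops one step short of the statement: everything you describe reproduces the proof of Lemma \ref{SPG}, i.e.\ it terminates at inequality \eqref{AQM}, whose right-hand side still carries the term
$(\log I_{\gamma})'(\gamma(K))\,\frac{1}{(\sqrt{2\pi})^{n+1}}\bigl(\int_{\sn}fh\sigma_{n}e^{-\frac{|Dh|^{2}}{2}}d\sigma\bigr)^{2}$.
The corollary you were asked to prove does not contain that term, and nowhere in your proposal do you use the hypothesis $\gamma(K)\geq 1/2$. The missing step --- which, given Lemma \ref{SPG}, is the \emph{entire} content of the corollary --- is the sign observation: since $(\log I_{\gamma})'(v)=-\Phi^{-1}(v)/I_{\gamma}(v)$ with $I_{\gamma}>0$, and since $\Phi(0)=1/2$ with $\Phi$ strictly increasing, the hypothesis $\gamma(K)\geq 1/2$ gives $\Phi^{-1}(\gamma(K))\geq 0$, hence $(\log I_{\gamma})'(\gamma(K))\leq 0$. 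The quadratic term is therefore non-positive and can be discarded from the right-hand side of \eqref{AQM}, which yields exactly the claimed inequality. Without this step your final inequality is genuinely different from the one stated: for $\gamma(K)<1/2$ the term you would need to drop is strictly positive (whenever $\int_{\sn}fh\sigma_{n}e^{-\frac{|Dh|^{2}}{2}}d\sigma\neq 0$), so it cannot be discarded, and the hypothesis is not decorative.

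On the positive side, the work you do carry out is sound and coincides with the paper's own proof of Lemma \ref{SPG}: the same test function $g=fh$ in Lemma \ref{KM}, the same dictionary ($d\mathcal{H}^{n}=\sigma_{n}d\sigma$, $\langle X,\nu_{K}(X)\rangle=h$, $H\sigma_{n}=\sum_{i}\sigma^{ii}_{n}$, and $\langle \Pi^{-1}_{\partial K}\nabla_{\partial K}g,\nabla_{\partial K}g\rangle\, d\mathcal{H}^{n}=\sum_{i,j}\sigma^{ij}_{n}g_{i}g_{j}\,d\sigma$), the same two integrations by parts exploiting $\sum_{j}\nabla_{j}\sigma^{ij}_{n}=0$ (these are \eqref{AQ} and \eqref{aa} in the paper), and Euler's relation $\sum_{i,j}\sigma^{ij}_{n}b_{ij}=n\sigma_{n}$ at the end. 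But since Lemma \ref{SPG} is already established in the paper, re-deriving it is redundant; the efficient proof of the corollary is to cite \eqref{AQM} and append the one-line sign argument above.
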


\begin{rem}
 From the proof of Theorem \ref{MTH}, one sees that the restriction  $R(K)\leq 1$  arises from  the application of the spectral form of the local Brunn-Minkowski inequality. Lemma \ref{SPG} might be helpful to extend Theorem \ref{MTH} to broader classes of convex bodies without imposing assumptions on $R(K)$, perhaps by finding a suitable test function $f$ into the spectral form of the local Ehrhard inequality \eqref{AQM}.
\end{rem}
\section*{Acknowledgment} The author would like to thank  Mohammad N. Ivaki for his very helpful discussions and comments on this work. %The author also thanks the referee for detailed reading and comments that are both  thorough and insightful.

{\bf Conflict of interest:} The author declares that there is no conflict of interest.

{\bf Data availability:} No data was used for the research described in the article.

\end{document}